\documentclass{ifacconf}

\usepackage{natbib}

\usepackage{graphicx}
\graphicspath{{Figures/}}
\usepackage{subcaption}

\usepackage{mathtools, amssymb, bm}

\newtheorem{lemma}{Lemma}
\newtheorem{proposition}{Proposition}
\theoremstyle{definition}
\newtheorem{definition}{Definition}
\theoremstyle{remark}
\newtheorem{remark}{Remark}

\usepackage{array}
\usepackage{enumerate}
\usepackage{enumitem}
\usepackage{xcolor}
\usepackage{balance}
\usepackage{multirow}
\usepackage{booktabs}

\allowdisplaybreaks

\DeclareMathOperator*{\argmin}{arg\,min}
\DeclareMathOperator*{\sign}{sign}
\DeclareMathOperator*{\diag}{diag}
\DeclareMathOperator*{\U}{\mathcal{U}}
\DeclareMathOperator*{\Uc}{\mathcal{U}_c}
\DeclareMathOperator*{\Uuc}{\mathcal{U}_{uc}}

\makeatletter
\DeclareRobustCommand{\qed}{%
  \ifmmode 
  \else \leavevmode\unskip\penalty9999 \hbox{}\nobreak\hfill
  \fi
  \quad\hbox{\qedsymbol}}
\newcommand{\qedsymbol}{$\blacksquare$}
\newenvironment{proof}[1][\proofname]{\par
  \normalfont
  \topsep6\p@\@plus6\p@ \trivlist
  \item[\hskip\labelsep\itshape
    #1.]\ignorespaces
}{%
  \qed\endtrivlist
}
\newcommand{\proofname}{Proof}
\makeatother

\begin{document}
\begin{frontmatter}

\title{Energetic Resilience of Linear Driftless Systems\thanksref{footnoteinfo}} 

\thanks[footnoteinfo]{This work was supported by Air Force Office of Scientific Research grant FA9550-23-1-0131 and NASA University Leadership Initiative grant 80NSSC22M0070. (Corresponding Author: Ram Padmanabhan. Email: \texttt{ramp3@illinois.edu.}).}

\author{Ram Padmanabhan and} 
\author{Melkior Ornik}

\address{University of Illinois Urbana-Champaign, Urbana, IL 61801, USA.}

\begin{abstract}
When a malfunction causes a control system to lose authority over a subset of its actuators, achieving a task may require spending additional energy in order to compensate for the effect of uncontrolled inputs. To understand this increase in energy, we introduce an energetic resilience metric that quantifies the maximal additional energy required to achieve finite-time regulation in linear driftless systems that suffer this malfunction. 
We first derive optimal control signals and minimum energies to achieve this task in both the nominal and malfunctioning systems. We then obtain a bound on the worst-case energy used by the malfunctioning system, and its exact expression in the special case of loss of authority over one actuator. Further considering this special case, we derive a bound on the metric for energetic resilience. A simulation example on a model of an underwater robot demonstrates that this bound is useful in quantifying the increased energy used by a system suffering such a malfunction.
\end{abstract}

\begin{keyword}
Driftless systems, linear systems, optimal control, optimization, resilient control.
\end{keyword}

\end{frontmatter}

\section{Introduction}
Control systems can suffer from a variety of failures, either due to system faults or adversarial attacks. Such failures can prevent a system from achieving a specific performance objective, including reachability or safety. Informally, a system is said to be \emph{resilient} if {its} objectives can be achieved despite a failure. An example of a failure was the ``loss of attitude control'' \citep{Nauka} suffered by the Nauka module while docking to the International Space Station (ISS) in 2021. The failure mode in this event was that of \emph{partial loss of control authority}, where the module lost authority over some of its thrusters, and other thrusters on the ISS had to be used to counteract this uncontrolled thrust.

Motivated by this example, in this paper, we consider systems that are affected by complete loss in control authority over a subset of their actuators. In such a system, the \emph{uncontrolled} inputs, potentially chosen by an adversary, can take on any values in the input space. However, these uncontrolled inputs are \emph{measurable}, and can be used by the \emph{controlled} inputs --- under the authority of the system --- in order to achieve a task. 
We note that building actuator redundancy \citep{SP90, Grossman95} or using control reconfiguration schemes \citep{BFP19, HLXZ21} can aid resilience, but result in prohibitively high costs in control design. Moreover, standard fault-tolerant control considers only limited actuator failure modes \citep{TCJ02, AH19}, and adaptive and robust control strategies fail since the uncontrolled input is unrestricted \citep{BO22b, AD08}.

\cite{BO20} derived conditions for the \emph{resilient reachability} of linear systems, i.e., {reachability of a target set under any uncontrolled inputs}. {These conditions were used by \cite{BO22b} to design controllers that enable linear systems to achieve a target despite this malfunction.} To quantify the additional time it may take for a target to be reached under a loss of control authority, \cite{BXO21, BO23} introduced the notion of \emph{quantitative resilience} for linear driftless and general linear systems. {Quantitative resilience} was defined as the maximal ratio of minimum reach times to achieve a target between a nominal system and a malfunctioning system. 

In this paper, we introduce {a metric for a similar quantitative notion} called \emph{energetic resilience}. Instead of considering minimal reach times for the nominal and malfunctioning systems as in the works of \cite{BXO21} and \cite{BO23}, we consider the minimal control energies required to drive the state to the origin from an initial condition, in a given finite time. In practical systems, the maximal additional control energy required to achieve this task is directly related to maximal additional resource consumption, such as fuel in vehicles. 
Designing this resource capacity must take into account all possible malfunctioning inputs, which is not a straightforward problem. {Considering minimal control energies instead of minimal reach times} allows us to derive optimal control signals that achieve the task in both the nominal and malfunctioning driftless systems, which was not accomplished  by \cite{BXO21, BO22b, BO23}. We also derive a closed-form expression for the worst-case uncontrolled input that seeks to maximize the energy used by the malfunctioning system, which is not provided in the earlier works. {A similar metric was used to quantify the maximal \emph{cost of disturbance} by \cite{PBDO24}, considering linear systems affected by external bounded disturbances with no input constraints. In contrast, this paper considers bounded controlled inputs and uncontrolled inputs can take on values with the same magnitude.} As a preliminary step towards studying the resilience of general linear and nonlinear systems, we focus on linear driftless systems, which can characterize a variety of robotic and underwater systems \cite{YWX16}. 

Our contributions are organized as follows. In Section \ref{sec:Formulation}, we formulate the problem statement, providing the definitions of key quantities derived in this paper. Section \ref{sec:Energies} discusses the optimal control signals, minimal control energies and {restrictions on the time required} to achieve a task in both the nominal and malfunctioning systems. These results use a technical lemma characterizing the optimal control. We also derive an upper bound on the worst-case control energy for the malfunctioning system over all possible uncontrolled inputs. In Section \ref{sec:1Act}, we consider the special case of losing authority over one actuator, deriving an exact expression for the worst-case control energy for the malfunctioning system and the worst-case uncontrolled input achieving this energy. We also derive a bound on the metric for energetic resilience, quantifying how much additional control energy is used by the malfunctioning system compared to the nominal system. In Section \ref{sec:Example}, we consider {a model} of an underwater robot and illustrate the applicability of our results. In particular, we show that the resilience metric accurately characterizes the additional control energy used by the malfunctioning system.

\subsection{Notation and Facts} \label{sec:Notation}
The set $\mathbb{R}^+ \coloneqq [0, \infty)$ is the set of all non-negative real numbers. For scalars $z \in \mathbb{R}$, define the $\sign(\cdot)$ function as $\sign(z) = z/|z| \in \{-1, +1\}$ if $z \neq 0$, with $\sign(0) = 0$. The $\sign(\cdot)$ function operates elementwise on vectors $z \in \mathbb{R}^n$. The $p$-norm of a vector $x \in \mathbb{R}^n$ is defined as $\|x\|_p \coloneqq \left(\sum_{i=1}^{n}|x_i|^p\right)^{1/p}$, with $\|x\|_{\infty} \coloneqq \max_i |x_i|$. For a matrix $L \in \mathbb{R}^{p\times q}$ with entries indexed $l_{ij}$, define the induced matrix norms $\|L\|_1 \coloneqq \max_j \sum_{i=1}^{p} |l_{ij}|$ and $\|L\|_{\infty} \coloneqq \max_i \sum_{j=1}^{q} |l_{ij}|$. The Moore-Penrose inverse \citep{P55}, also called the pseudoinverse of $L$, is denoted $L^{\dagger}$. For a continuous function $u:[0, t_f]\to \mathbb{R}^p$, the $\mathcal{L}_2$ norm is defined as $\|u\|_{\mathcal{L}_2} \coloneqq \sqrt{\int_{t=0}^{t_f}\|u(t)\|_{2}^{2}~\mathrm{d}t}$.

For two matrices $M$ and $N$ such that the product $MN$ can be defined, the sub-multiplicative property of matrix norms is written as $\|MN\| \leq \|M\|\|N\|$, where $\|.\|$ is any induced norm. For any two vectors $x \in \mathbb{R}^n$ and $y \in \mathbb{R}^n$, the Cauchy-Schwarz inequality can be written as $|x^Ty| \leq \|x\|_2\|y\|_2$. The minimum and maximum eigenvalues of a symmetric matrix $P \in \mathbb{R}^{n\times n}$ are denoted $\lambda_{\min}(P)$ and $\lambda_{\max}(P)$. {For such a matrix, the Rayleigh inequality \citep{Horn} $\lambda_{\min}(P)\|x\|_{2}^{2} \leq x^TPx \leq \lambda_{\max}(P)\|x\|_{2}^{2}$ holds for any vector $x \in \mathbb{R}^n$.}

\section{Problem Formulation} \label{sec:Formulation}
In this paper, we consider linear driftless systems
\begin{equation} \label{eq:Nominal}
	\dot{x}(t) = Bu(t), ~~x(0) = x_0 \neq 0,
\end{equation}
where $x(t) \in \mathbb{R}^n$ is the state and $u(t) \in \mathbb{R}^{m+p}$ is the control. The set of admissible controls $\U$ is defined as:
\begin{equation} \label{eq:SetU}
\U \coloneqq \left\{u: \mathbb{R}^+ \to \mathbb{R}^{m+p}: \|u(t)\|_{\infty} \leq 1 ~\text{ for all $t$}\right\},
\end{equation}
in line with prior work \citep{BO23}. We consider malfunctions that result in the system losing control authority over $p$ of its $m+p$ actuators. Then, the matrix $B$ and control $u(t)$ can be split into \emph{controlled} and \emph{uncontrolled} components:
\begin{equation} \label{eq:Malfunctioning}
	\dot{x}(t) = \begin{bmatrix} B_c && B_{uc} \end{bmatrix} \begin{bmatrix} u_c(t) \\ u_{uc}(t) \end{bmatrix} = B_cu_c(t) + B_{uc}u_{uc}(t),
\end{equation}
where the subscripts $c$ and $uc$ denote \emph{controlled} and \emph{uncontrolled} respectively. Here, $B = [B_c, B_{uc}]$, $B_c \in \mathbb{R}^{n\times m}$, $B_{uc} \in \mathbb{R}^{n\times p}$, $u_c(t) \in \mathbb{R}^m$ and $u_{uc}(t) \in \mathbb{R}^p$. The set of admissible controls $\U$ splits into $\Uc$ and $\Uuc$ as
\begin{subequations}
\begin{align}
\Uc &\coloneqq \left\{u_c: \mathbb{R}^+ \to \mathbb{R}^{m}: \|u_c(t)\|_{\infty} \leq 1 ~\text{for all $t$}\right\}, \label{eq:SetUc} \\
\Uuc &\coloneqq \left\{u_{uc}: \mathbb{R}^+ \to \mathbb{R}^{p}: \|u_{uc}(t)\|_{\infty} \leq 1 ~\text{for all $t$}\right\}. \label{eq:SetUuc}
\end{align}
\end{subequations}
In this setting, the system has authority over the controlled input $u_c$, but the uncontrolled input $u_{uc}$ can be chosen arbitrarily from the space $\Uuc$, potentially by an adversary. However, $u_{uc}$ is {observable} and can be used to design $u_c$.

Our objective is the task of \emph{finite-time regulation}, achieving $x(t_f) = 0$ for a specified final time $t_f$ using the control $u \in \U$ in the nominal case and $u_c \in \Uc$ for any $u_{uc} \in \Uuc$ in the malfunctioning case. In this context, we define the notion of \emph{finite-time stabilizing resilience} of a system, adapted from \cite{BXO21}.

\begin{definition}[Finite-time Stabilizing Resilience] \label{def:Resilience}~
A system \eqref{eq:Nominal} is \emph{resilient} to the loss of control authority over $p$ of its actuators, represented by the matrix $B_{uc}$ if, for all uncontrolled inputs $u_{uc} \in \Uuc$ {and for a given final time $t_f$}, there exists a controlled input $u_c \in \Uc$ such that the system achieves finite-time regulation, i.e. $x(t_f) = 0$.
\end{definition}

{A system may be finite-time stabilizing resilient for some final times $t_f$ and not resilient for other final times. Throughout the rest of this paper, we refer to finite-time stabilizing resilience as simply \emph{resilience}.} We also assume that the nominal dynamics \eqref{eq:Nominal} are controllable so that finite-time regulation can be achieved from any initial state $x_0$. In both the nominal and malfunctioning cases, we are interested in the minimal control energies to achieve this task. Achieving this task might require considerably more control energy in the malfunctioning case compared to the nominal case. We thus aim to quantify the maximal additional control energy used by the malfunctioning system over all possible uncontrolled inputs $u_{uc}$, compared to the nominal system. To this end, we make the following definitions.

\begin{definition}[Nominal Energy] \label{def:EN}
The \emph{nominal energy} is the minimum energy in the input $u$ required to achieve finite-time regulation in time $t_f$ from the initial condition $x_0 \neq 0$, following the nominal dynamics \eqref{eq:Nominal}:
\begin{equation} \label{eq:EN_def}
	E_{N}^{*}(x_0, t_f) \coloneqq \inf_{u \in \U} \left\{\left\|u\right\|_{\mathcal{L}_2}^{2}~ \text{s.t. $x(t_f) = 0$ using \eqref{eq:Nominal}}\right\}.
\end{equation}
\end{definition}

\begin{definition}[Malfunctioning Energy] \label{def:EM}
The \emph{malfunctioning energy} is the minimum energy in the controlled input $u_c$ required to achieve finite-time regulation in time $t_f$ from the initial condition $x_0 \neq 0$, for a given uncontrolled input $u_{uc}$, following the malfunctioning dynamics \eqref{eq:Malfunctioning}:
\begin{align}
	E_{M}^{*}(x_0, t_f, u_{uc}) &\coloneqq \inf_{u_c(u_{uc}) \in \Uc} \Bigl\{\left\|u_c(u_{uc})\right\|_{\mathcal{L}_2}^{2} \nonumber \\
	&~\text{s.t. $x(t_f) = 0$ in \eqref{eq:Malfunctioning}, for the given $u_{uc}$}\Bigr\}, \label{eq:EM_def}
\end{align}
where $u_c(u_{uc})$ explicitly provides the dependence of the controlled input $u_c:\mathbb{R}^+ \to \mathbb{R}^m$ on the uncontrolled input $u_{uc}$, with a slight abuse of notation.
\end{definition}

\begin{definition}[Total Energy] \label{def:EMplus}
The \emph{total energy} is the energy used by \emph{all inputs} of the malfunctioning system \eqref{eq:Malfunctioning} for a given uncontrolled input $u_{uc}$, when the optimal controlled input $u_c(u_{uc})$ in \eqref{eq:EM_def} is used. In other words,
\begin{equation} \label{eq:EMplus}
E_{M}^{+}(x_0, t_f, u_{uc}) \coloneqq E_{M}^{*}(x_0, t_f, u_{uc}) + \left\|u_{uc}\right\|_{\mathcal{L}_2}^{2}.
\end{equation}
\end{definition}

\begin{definition}[Worst-case Total Energy] \label{def:EMbar}
The \emph{worst-case total energy} is the maximal effect of the uncontrolled input $u_{uc}$ on the total energy \eqref{eq:EMplus} used by the system:
\begin{align}
	\overline{E}_M(x_0, t_f) &= \sup_{u_{uc} \in \Uuc} \left\{E_{M}^{*}(x_0, t_f, u_{uc}) + \left\|u_{uc}\right\|_{\mathcal{L}_2}^{2}\right\}. \label{eq:EMbar_def}
\end{align}
\end{definition}

The goal of this paper is to understand how much larger the worst-case total energy is, compared to the nominal energy. This increase in energy is quantified by the following \emph{energetic resilience} metric.


\begin{definition}[Energetic Resilience] \label{def:rM}
For an initial condition $x_0$ at a distance of at least $R$ from the origin, we define the \emph{energetic resilience} of system \eqref{eq:Nominal} as
\begin{equation} \label{eq:rM_def}
r_M(t_f, R) \coloneqq \inf_{\|x_0\|_2 \geq R} \frac{E_{N}^{*}(x_0, t_f)}{\overline{E}_M(x_0, t_f)}.
\end{equation}
\end{definition}

Note that if $r_M(t_f, R) \geq 1/2$, then the malfunctioning system uses \emph{at most twice} the energy used by the nominal system to achieve finite-time regulation, for a given $t_f$ and $R$. We thus require a lower bound for this metric. Without the constraint $\|x_0\|_2 \geq R$, $E_{N}^{*}(x_0, t_f)$ can be arbitrarily close to zero, resulting in a trivial value for $r_M(t_f, R)$. We note that the definition of $r_M(t_f, R)$ is closely related to the definition of \emph{quantitative resilience} defined by \cite{BXO21} and \cite{BO23}. However, quantitative resilience was defined using reachable time while our definitions consider the control energy.

In the following section, we derive {closed-form expressions for the nominal and malfunctioning energy, as well as a bound on the worst-case total energy.} To quantify the maximal additional energy used by the malfunctioning system compared to the nominal system, we compare the quantities $E_{N}^{*}(x_0, t_f)$ from \eqref{eq:EN_def} and $\overline{E}_M(x_0, t_f)$ from \eqref{eq:EMbar_def}. This comparison is achieved in Section \ref{sec:1Act} by deriving bounds on the energetic resilience metric \eqref{eq:rM_def}, for the special case of losing authority over a single actuator.

\section{Nominal and Malfunctioning Energies} \label{sec:Energies}

In this section, we derive expressions for the energies defined in Section \ref{sec:Formulation}, as well as the corresponding optimal control inputs. We first present a lemma central to the development in this section.

\begin{lemma} \label{lem:MinEnergy}
Let $\mathcal{Z} \coloneqq \Bigl\{z:[0, t_f] \to \mathbb{R}^{n_z} : \frac{1}{t_f} \int_{0}^{t_f} z(t)\mathrm{d}t = \overline{z} \in \mathbb{R}^{n_z}\Bigr\}$ be the set of continuous, real vector-valued functions with given mean value $\overline{z}$ in the interval $[0, t_f]$. Let $z^*(t)$ denote the function with the minimum energy in $\mathcal{Z}$, i.e., $ z^* \coloneqq \argmin_{z \in \mathcal{Z}} \int_{0}^{t_f} z^T(t)z(t) \mathrm{d}t.$ Then, 
$$
z^*(t) = \overline{z} ~\text{ for all $t \in [0, t_f]$,}
$$
i.e., the function with minimum energy is a constant.
\end{lemma}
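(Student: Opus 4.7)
The plan is to decompose an arbitrary $z \in \mathcal{Z}$ into its mean $\bar z$ plus a zero-mean deviation, and show that the energy functional splits into a fixed term depending only on $\bar z$ and a nonnegative term depending only on the deviation, which is minimized uniquely by the zero deviation.

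Concretely, for any $z \in \mathcal{Z}$, I would write $z(t) = \bar z + w(t)$, where $w(t) \coloneqq z(t) - \bar z$. The mean constraint immediately gives $\int_0^{t_f} w(t)\,\mathrm{d}t = \int_0^{t_f} z(t)\,\mathrm{d}t - t_f \bar z = 0$. Substituting this decomposition into the energy integral yields
\begin{equation*}
\int_0^{t_f} z^T(t) z(t)\,\mathrm{d}t = t_f \,\bar z^T \bar z \;+\; 2\bar z^T \!\int_0^{t_f} w(t)\,\mathrm{d}t \;+\; \int_0^{t_f} w^T(t) w(t)\,\mathrm{d}t.
\end{equation*}
The middle term vanishes by the zero-mean property of $w$, leaving
\begin{equation*}
\int_0^{t_f} z^T(t) z(t)\,\mathrm{d}t = t_f \|\bar z\|_2^2 + \int_0^{t_f} \|w(t)\|_2^2\,\mathrm{d}t.
\end{equation*}

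Since $\bar z$ is fixed by the definition of $\mathcal{Z}$, the first term is a constant across all admissible $z$, and the second term is nonnegative, with equality to zero if and only if $w(t) \equiv 0$ (using continuity of $w$). Hence the infimum is attained uniquely by $z^*(t) = \bar z$, proving the claim. There is no real obstacle here; the only subtlety worth being explicit about is that the cross term is truly zero because $\bar z$ is a constant vector that pulls out of the integral, so the argument reduces to applying the integrated mean-zero condition on $w$. Alternatively, one could obtain the same conclusion from a componentwise Cauchy--Schwarz inequality $(\int_0^{t_f} z_i(t)\,\mathrm{d}t)^2 \leq t_f \int_0^{t_f} z_i^2(t)\,\mathrm{d}t$ with equality iff $z_i$ is constant, but the direct decomposition above is cleaner and also pinpoints the unique minimizer.
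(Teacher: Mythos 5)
Your proof is correct, but it takes a genuinely different route from the paper. The paper disposes of this lemma in one line by invoking the Euler--Lagrange equation for the constrained calculus-of-variations problem $\min_z \int_0^{t_f} z^T z\,\mathrm{d}t$ subject to the integral (isoperimetric) constraint: the first-order condition for the Lagrangian $z^T z + \lambda^T z$ forces $z$ to be constant. Your argument instead uses the orthogonal decomposition $z = \overline{z} + w$ with $\int_0^{t_f} w\,\mathrm{d}t = 0$, so that the energy splits as $t_f\|\overline{z}\|_2^2 + \int_0^{t_f}\|w(t)\|_2^2\,\mathrm{d}t$ with the cross term vanishing. This is more elementary --- no variational machinery at all --- and it also buys you strictly more than the paper's sketch: the Euler--Lagrange condition by itself only certifies stationarity, and one must separately appeal to convexity of the functional to conclude global optimality, whereas your decomposition directly exhibits the constant function as the unique global minimizer (uniqueness within the class of continuous functions following from $\int_0^{t_f}\|w\|_2^2\,\mathrm{d}t = 0 \Rightarrow w \equiv 0$). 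Both approaches are sound; yours is self-contained and arguably the more complete proof of the statement as written.
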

\begin{proof}
The result follows from the first-order condition for optimality using the Euler-Lagrange equation \citep{CVOC}, for the constrained calculus of variations problem
$$
\argmin_{z} \int_{0}^{t_f} z^T(t)z(t) \mathrm{d}t  ~\text{ s.t. }~ \frac{1}{t_f} \int_{0}^{t_f} z(t)\mathrm{d}t = \overline{z}.
$$
\end{proof}

Armed with this result, we first {derive a closed-form expression for the nominal energy.}

\subsection{Nominal Energy} \label{sec:Nominal}
Consider the system \eqref{eq:Nominal}. Solving for $x(t_f) = 0$,
\begin{equation} \label{eq:Nom_Sol1}
	x(t_f) = x_0 + \int_{0}^{t_f} Bu(t)\mathrm{d}t = 0.
\end{equation}
Define $\overline{u} \coloneqq \frac{1}{t_f} \int_{0}^{t_f} u(t)\mathrm{d}t$, the mean value of the control $u(t)$ in the interval $[0, t_f]$. Then, \eqref{eq:Nom_Sol1} reduces to
\begin{equation} \label{eq:Nom_Sol2}
	x_0 = -t_fB\overline{u}.
\end{equation}
Since we wish to find the minimum energy control signal $u$, we require the solution $\overline{u}$ with the least norm $\|\overline{u}\|_{2}$, based on Lemma \ref{lem:MinEnergy}. This \emph{least-norm} solution \citep[Corollary 2]{P56} is given by
\begin{equation} \label{eq:u_LS}
	\overline{u}^{LS} = -\frac{1}{t_f}B^{\dagger}x_0,
\end{equation}
where $B^{\dagger}$ is defined as in \cite{P55}. As the nominal dynamics \eqref{eq:Nominal} are assumed to be controllable, a control law achieves finite-time stabilization \emph{if and only if} its mean value over the interval $[0, t_f]$ is given by \eqref{eq:u_LS}. Next, note that the condition $\|\overline{u}^{LS}\|_{\infty} \leq 1$ is \emph{necessary} to ensure 
$u \in \U$. This condition is satisfied \emph{if and only if} 
\begin{equation} \label{eq:Nominal_Cond}
	t_f \geq \|B^{\dagger}x_0\|_{\infty}.
\end{equation}
Thus, condition \eqref{eq:Nominal_Cond} is {necessary} to ensure that a control $u$ achieving $x(t_f) = 0$ also satisfies $u \in \U$. From Lemma \ref{lem:MinEnergy}, over all signals with a given mean value, the minimum energy signal is a constant equal to that mean value. Thus, the signal achieving the infimum in \eqref{eq:EN_def}, denoted $u^*(t)$, is
\begin{equation} \label{eq:uN}
	u^*(t) = \overline{u}^{LS} = -\frac{1}{t_f}B^{\dagger}x_0 ~\text{ for all }~ t \in [0, t_f],
\end{equation}
from which
\begin{equation} \label{eq:EN}
	E_{N}^{*}(x_0, t_f) = \int_{0}^{t_f} u^{*T}(t)u^*(t)\mathrm{d}t = \frac{1}{t_f} \|B^{\dagger}x_0\|_{2}^{2}.
\end{equation}
Since $u^*(t) = \overline{u}^{LS}$, condition \eqref{eq:Nominal_Cond} is both necessary and sufficient to ensure $u^* \in \U$. 

\subsection{Malfunctioning Energy} \label{sec:Malfunctioning}
We now consider the malfunctioning system \eqref{eq:Malfunctioning} and derive expressions for the malfunctioning \eqref{eq:EM_def} and worst-case total \eqref{eq:EMbar_def} energies. Setting $x(t_f) = 0$ in the solution to \eqref{eq:Malfunctioning},
\begin{equation} \label{eq:Malf_Sol}
	x_0 + t_f B_c\overline{u}_c + t_f B_{uc}\overline{u}_{uc} = 0, 
\end{equation}
where $\overline{u}_c \coloneqq \frac{1}{t_f}\int_{0}^{t_f} u_c(t) \mathrm{d}t$ and $\overline{u}_{uc} \coloneqq \frac{1}{t_f} \int_{0}^{t_f} u_{uc}(t) \mathrm{d}t$ are the mean values of the control signals $u_c$ and $u_{uc}$. {Equation \eqref{eq:Malf_Sol} is linear} in the mean of the controlled input $\overline{u}_c$. {To find the minimum energy in the controlled input, based on Lemma \ref{lem:MinEnergy},} we are interested in the least-norm solution to this equation. {Analogously to \eqref{eq:u_LS}, the solution is given by}
\begin{equation} \label{eq:uc_LS}
	\overline{u}_{c}^{LS} = -\frac{1}{t_f} B_{c}^{\dagger}\left(x_0 + t_fB_{uc}\overline{u}_{uc}\right).
\end{equation}
Thus, finite-time stabilization can be achieved in the malfunctioning system \eqref{eq:Malfunctioning} if and only if the mean of the controlled input $u_c$ satisfies \eqref{eq:uc_LS}. Note that $\overline{u}_{c}^{LS}$ depends on the mean of the uncontrolled input $u_{uc}$ as well, which is an unknown quantity. {Further, constraints $u_c \in \Uc$ and $u_{uc} \in \Uuc$ reduce to $\|\overline{u}_{c}^{LS}\|_{\infty} \leq 1$ and $\|\overline{u}_{uc}\|_{\infty} \leq 1$. In the malfunctioning case, deriving a closed-form condition of the form of \eqref{eq:Nominal_Cond} is difficult, since condition $\|\overline{u}_{c}^{LS}\|_{\infty} \leq 1$ contains terms in $t_f$ in both its numerator and denominator and is clearly not linear in $t_f$.} However, for a given $t_f$, we can check whether $\|\overline{u}_{c}^{LS}\|_{\infty} \leq 1$ for all uncontrolled inputs ${u}_{uc}$ by checking the condition
\begin{equation} \label{eq:Malf_Cond}
	\max_{\|\overline{u}_{uc}\|_{\infty} \leq 1} \frac{1}{t_f} \left\|B_{c}^{\dagger}\left(x_0 + t_fB_{uc}\overline{u}_{uc}\right)\right\|_{\infty} \leq 1.
\end{equation}
This condition can easily be decomposed into a set of conditions that are linear in $\overline{u}_{uc}$, which can be checked efficiently on the vertices of the hypercube $\|\overline{u}_{uc}\|_{\infty} \leq 1$ \citep{LinOpt}. Finally, as a consequence of Lemma \ref{lem:MinEnergy}, we note that the control signal $u_{c}^{*}$ achieving the infimum in \eqref{eq:EM_def} is a constant equal to the mean value $\overline{u}_{c}^{LS}$. Thus, under the condition \eqref{eq:Malf_Cond} on $t_f$,
\begin{equation} \label{eq:uM}
	u_{c}^{*}(t) = \overline{u}_{c}^{LS} = -\frac{1}{t_f} B_{c}^{\dagger}\left(x_0 + t_fB_{uc}\overline{u}_{uc}\right) ~\text{for all}~ t \in [0, t_f].
\end{equation}
Clearly, the corresponding malfunctioning energy is
\begin{equation} \label{eq:EM}
	E_{M}^{*}(x_0, t_f, u_{uc}) = \frac{1}{t_f} \left\|B_{c}^{\dagger}\left(x_0 + t_fB_{uc}\overline{u}_{uc}\right)\right\|_{2}^{2}.
\end{equation}

We now aim to quantify the maximal effect of the uncontrolled input $u_{uc}$ on both the optimal controlled input $u_{c}^{*}$ in \eqref{eq:uM} and the malfunctioning energy $E_{M}^{*}(x_0, t_f, u_{uc})$ in \eqref{eq:EM}, using the worst-case total energy $\overline{E}_M(x_0, t_f)$ defined in \eqref{eq:EMbar_def}. A closed-form, analytical expression for the supremum in \eqref{eq:EMbar_def} using \eqref{eq:EM} is not straightforward to obtain, and hence we focus on bounding $\overline{E}_M(x_0, t_f)$.

\begin{proposition}[Worst-case Total Energy] \label{prop:EMbar}
The worst-case total energy $\overline{E}_M(x_0, t_f)$ is bounded from above as follows:
\begin{align}
\overline{E}_M(x_0, t_f) &\leq \frac{1}{t_f}\left\|B_{c}^{\dagger}x_0\right\|_{2}^{2} + t_f\left(\sum_{i = 1}^{p} \lambda_i \|V\|_{1}^{2} + p\right) \nonumber \\
&+ 2\|B_{uc}^{T}B_{c}^{\dagger T}B_{c}^{\dagger}x_0\|_1. \label{eq:EMbar}
\end{align}
\end{proposition}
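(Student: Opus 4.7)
The plan is to start from the closed-form malfunctioning energy \eqref{eq:EM}, expand the squared norm, and separately bound the three resulting terms along with the uncontrolled-input energy $\|u_{uc}\|_{\mathcal{L}_2}^2$, then take the supremum over $u_{uc} \in \Uuc$. Writing $\|B_c^\dagger(x_0 + t_f B_{uc}\overline{u}_{uc})\|_2^2$ as a quadratic form yields
\[
E_M^*(x_0,t_f,u_{uc}) = \frac{1}{t_f}\|B_c^\dagger x_0\|_2^2 + 2 x_0^T B_c^{\dagger T}B_c^\dagger B_{uc}\overline{u}_{uc} + t_f\, \overline{u}_{uc}^T M \overline{u}_{uc},
\]
where $M \coloneqq B_{uc}^T B_c^{\dagger T}B_c^\dagger B_{uc}$ is positive semidefinite. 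The first term does not depend on $u_{uc}$, so it contributes the first summand in \eqref{eq:EMbar} directly.

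For the cross term, I would apply the Hölder-type duality bound $|y^T\overline{u}_{uc}| \leq \|y\|_1 \|\overline{u}_{uc}\|_\infty$, with $y = B_{uc}^T B_c^{\dagger T}B_c^\dagger x_0$; since $\|\overline{u}_{uc}\|_\infty \leq 1$ whenever $u_{uc}\in\Uuc$, this yields the $2\|B_{uc}^T B_c^{\dagger T}B_c^\dagger x_0\|_1$ summand. For the quadratic term $t_f\overline{u}_{uc}^T M \overline{u}_{uc}$, I would diagonalize $M = V\Lambda V^T$ with eigenvalues $\lambda_i$ (the notation suggests $V$ is the orthonormal matrix of eigenvectors used elsewhere), change variables to $w = V^T\overline{u}_{uc}$, and bound each coordinate by $|w_i| = |\sum_j V_{ji}\overline{u}_{uc,j}| \leq \sum_j |V_{ji}| \leq \|V\|_1$ using the column-sum definition of $\|V\|_1$ and $\|\overline{u}_{uc}\|_\infty \leq 1$. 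This gives $\overline{u}_{uc}^T M \overline{u}_{uc} = \sum_i \lambda_i w_i^2 \leq \|V\|_1^2 \sum_i \lambda_i$, producing the $t_f \sum_i \lambda_i \|V\|_1^2$ contribution.

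Finally, I would handle the uncontrolled-input energy separately by the pointwise bound $\|u_{uc}(t)\|_2^2 \leq p\|u_{uc}(t)\|_\infty^2 \leq p$ valid for $u_{uc}\in\Uuc$, which, upon integration over $[0,t_f]$, gives $\|u_{uc}\|_{\mathcal{L}_2}^2 \leq p t_f$ and contributes the remaining $p t_f$. Since each of the three $u_{uc}$-dependent bounds holds uniformly over $\Uuc$, summing them and taking the supremum over $u_{uc}$ yields \eqref{eq:EMbar}. The main obstacle is recognizing that the quadratic term requires the eigendecomposition bound rather than a direct matrix-norm estimate, since a bound such as $\overline{u}_{uc}^T M \overline{u}_{uc} \leq \|M\|_\infty\|\overline{u}_{uc}\|_\infty^2$ would be looser and would not yield the explicit form in terms of the spectral data $(\lambda_i, V)$; the remaining steps are applications of Hölder's inequality and the norm constraints from $\U_{uc}$.
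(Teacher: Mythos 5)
Your proof follows essentially the same route as the paper's: expand the square in \eqref{eq:EM}, split the supremum over the constant term, the cross term, the quadratic term, and the uncontrolled-input energy, then bound each piece via H\"older duality on the cross term, an eigendecomposition plus the column-sum bound $\|V^T\overline{u}_{uc}\|_\infty \leq \|V\|_1$ on the quadratic term, and the pointwise estimate $\|u_{uc}(t)\|_2^2 \leq p$ on the $\mathcal{L}_2$ term. All of these steps are sound (the paper computes the suprema of the cross and $\mathcal{L}_2$ terms exactly where you merely upper-bound them, but the resulting constants coincide).

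The one substantive difference is which matrix you diagonalize. You correctly observe that expanding $\frac{1}{t_f}\|B_c^{\dagger}(x_0 + t_f B_{uc}\overline{u}_{uc})\|_2^2$ produces the quadratic form $t_f\,\overline{u}_{uc}^T M \overline{u}_{uc}$ with $M = B_{uc}^T B_c^{\dagger T}B_c^{\dagger}B_{uc}$, and you take $(\lambda_i, V)$ to be the spectral data of $M$. The paper instead writes this term as $t_f\,\overline{u}_{uc}^T B_{uc}^T B_{uc}\overline{u}_{uc}$ and defines $(\lambda_i, V)$ from the spectral decomposition of $B_{uc}^T B_{uc}$; since the proposition statement itself leaves $(\lambda_i, V)$ undefined, the two readings give genuinely different numerical bounds, and there is no general inequality relating $\overline{u}_{uc}^T M \overline{u}_{uc}$ to $\overline{u}_{uc}^T B_{uc}^T B_{uc}\overline{u}_{uc}$ (the factor $B_c^{\dagger T}B_c^{\dagger}$ is not the identity). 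Your version is the one that follows directly from the algebraic expansion of \eqref{eq:EM}, so your argument proves the bound with $(\lambda_i, V)$ the spectral data of $M$; to recover the bound exactly as the paper intends it, an additional justification for replacing $M$ by $B_{uc}^T B_{uc}$ would be needed, and none is available in general.
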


\begin{proof}
Using \eqref{eq:EM}, we have
\begin{align}
&\overline{E}_M(x_0, t_f) = \sup_{u_{uc} \in \Uuc} \Biggl\{\frac{1}{t_f} \left\|B_{c}^{\dagger}\left(x_0 + t_fB_{uc}\overline{u}_{uc}\right)\right\|_{2}^{2} \nonumber \\
&+ \int_{0}^{t_f} u_{uc}^{T}(t)u_{uc}(t)\mathrm{d}t\Biggr\} \nonumber \\
&\leq \frac{1}{t_f}\left\|B_{c}^{\dagger}x_0\right\|_{2}^{2} + \underbrace{\sup_{\|\overline{u}_{uc}\|_{\infty} \leq 1} t_f \overline{u}_{uc}^{T}B_{uc}^{T}B_{uc}\overline{u}_{uc}}_{=T_1} \nonumber \\
&+ \underbrace{\sup_{u_{uc} \in \Uuc} 2x_{0}^{T}B_{c}^{\dagger T}B_{c}^{\dagger}B_{uc}\overline{u}_{uc}}_{=T_2} + \underbrace{\sup_{u_{uc} \in \Uuc} \int_{0}^{t_f} u_{uc}^{T}(t)u_{uc}(t)\mathrm{d}t}_{=T_3}, \label{eq:EMbar_1}
\end{align}
where the inequality is obtained on splitting the supremum over three different terms. {We first consider $T_1$.} Note that $B_{uc}^{T}B_{uc} \in \mathbb{R}^{p\times p}$ is positive semi-definite. Let $B_{uc}^{T}B_{uc} = V\Lambda V^T$ be its spectral decomposition where $\Lambda = \diag\{\lambda_1, \ldots, \lambda_p\}$ with each $\lambda_i \geq 0$ and $V$ is the {orthonormal} matrix of eigenvectors of $B_{uc}^{T}B_{uc}$. Further, let $q = V^T\overline{u}_{uc}$. Then, $\|q\|_{\infty} \leq \|V^T\|_{\infty} \|\overline{u}_{uc}\|_{\infty} \leq \|V\|_1$, where we use the sub-multiplicative property of norms, $\|V^T\|_{\infty} = \|V\|_1$ and $\|\overline{u}_{uc}\|_{\infty} \leq 1$ for $u_{uc} \in \Uuc$. Changing the optimization variable to $q$ using this property,
\begin{align}
T_1 &\leq t_f \sup_{\|q\|_{\infty} \leq \|V\|_1} q^T\Lambda q = t_f\sup_{\|q\|_{\infty} \leq \|V\|_1} \sum_{i = 1}^{p} \lambda_iq_{i}^{2} \nonumber \\
&= t_f\sum_{i = 1}^{p} \lambda_i \|V\|_{1}^{2}, \label{eq:T1}
\end{align}
where the optimal $q^*$ has each component $q_{i}^{*} = \pm \|V\|_1$, maximizing the objective since each $\lambda_i > 0$. 
Next, consider $T_2$. For any vector $\eta \in \mathbb{R}^p$, $\sup_{\|\overline{u}_{uc}\|_{\infty} \leq 1} \eta^T \overline{u}_{uc} = \|\eta\|_1,$ where the optimal $\overline{u}_{uc}$ is given by $\overline{u}_{uc}^{*} = \sign(\eta)$. Thus,
\begin{equation} \label{eq:T2}
T_2 = 2\|B_{uc}^{T}B_{c}^{\dagger T}B_{c}^{\dagger}x_0\|_1,
\end{equation}
and each component of $\overline{u}_{uc}^{*}$ that maximizes $T_2$ is either $+1$ or $-1$. This implies that each component of the worst-case uncontrolled input $u_{uc}^{*}(t)$ for $T_2$, also takes on a constant value of either $+1$ or $-1$ in the interval $[0, t_f]$. This {choice of $u_{uc}^{*}(t)$} also maximizes $T_3$, since the maximum possible value of the integrand $u_{uc}^{T}(t)u_{uc}(t) = p$ for $u_{uc} \in \Uuc$ is achieved at every time instant $t$, {where $p$ is the dimension of $u_{uc}$.} Hence, we have
\begin{equation} \label{eq:T3}
T_3 = t_fp.
\end{equation}
Substituting \eqref{eq:T1}, \eqref{eq:T2} and \eqref{eq:T3} in \eqref{eq:EMbar_1}, the result follows.
\end{proof}

\begin{remark}
The bound obtained in \eqref{eq:EMbar} may be conservative. {However, in Section \ref{sec:1Act}, we consider the special case when $p = 1$,} i.e., when control authority is lost over a single actuator, and derive a closed-form expression for $\overline{E}_{M}(x_0, t_f)$. We then use {that expression} to derive bounds on the energetic resilience metric \eqref{eq:rM_def}.
\end{remark}

\begin{remark} \label{rem:JB}
We note some similarities between the results of this section and the work of \cite{BXO21} on time-optimality for linear driftless systems. We have shown that the energy-optimal control signals in the nominal \eqref{eq:uN} and malfunctioning \eqref{eq:uM} cases are constant. Similarly, it was shown by \cite{BXO21} that the \emph{time}-optimal control signals in both the nominal and malfunctioning cases are constant; however, closed-form expressions for this signal are not provided by \cite{BXO21}, unlike in \eqref{eq:uN} and \eqref{eq:uM}. We also note in the malfunctioning case that each component of the worst-case uncontrolled input $u_{uc}^{*}(t)$ takes on a constant value of either $+1$ or $-1$ in the interval $[0, t_f]$. A similar result in \citep{BXO21} shows that the worst-case uncontrolled input maximizing reachable time also has constant components with maximum allowable amplitude. However, closed-form expressions for this worst-case uncontrolled input are also not provided by \cite{BXO21}.
\end{remark}


\section{Resilience to the Loss of One Actuator} \label{sec:1Act}
We now consider the special case of losing control authority over one actuator, i.e., $p = 1$. We derive a closed-form expression for the worst-case total energy \eqref{eq:EMbar_def} {and the corresponding} worst-case uncontrolled input. Using this expression, we derive a bound on the resilience metric \eqref{eq:rM_def}.

\subsection{Worst-case Total Energy}
When $p = 1$, $B_{uc} \in \mathbb{R}^n$ and $u_{uc}(t) \in \mathbb{R}$. From \eqref{eq:EMbar_def} and \eqref{eq:EM},
\begin{align}
&\overline{E}_M(x_0, t_f) \nonumber \\
&= \sup_{u_{uc} \in \Uuc} \Biggl\{\frac{1}{t_f} \left\|B_{c}^{\dagger}\left(x_0 + t_fB_{uc}\overline{u}_{uc}\right)\right\|_{2}^{2} + \int_{0}^{t_f} u_{uc}^{2}(t)\mathrm{d}t\Biggr\} \nonumber \\
&= \frac{1}{t_f}\left\|B_{c}^{\dagger}x_0\right\|_{2}^{2} + \sup_{u_{uc} \in \Uuc} \Biggl\{t_fB_{uc}^{T}B_{uc} \overline{u}_{uc}^{2} \nonumber \\
&+ 2x_{0}^{T}B_{c}^{\dagger T}B_{c}^{\dagger}B_{uc}\overline{u}_{uc} + \int_{0}^{t_f} u_{uc}^{2}(t)\mathrm{d}t\Biggr\}.\label{eq:EMbar_2}
\end{align}
Recall that $u_{uc} \in \Uuc$ implies $\|\overline{u}_{uc}\|_{\infty} \leq 1$, or $-1 \leq \overline{u}_{uc} \leq 1$. Note that the second term in the supremum is maximized by $\overline{u}_{uc}^{*} = \sign(B_{uc}^{T}B_{c}^{\dagger T}B_{c}^{\dagger}x_0)$ as in Proposition \ref{prop:EMbar}, and reduces to $2\left|B_{uc}^{T}B_{c}^{\dagger T}B_{c}^{\dagger}x_0\right|$. When $\sign(B_{uc}^{T}B_{c}^{\dagger T}B_{c}^{\dagger}x_0) \in \{-1,+1\}$, this choice of $\overline{u}_{uc}^{*}$ also maximizes the other terms inside the supremum so that
\begin{align}
\overline{E}_M(x_0, t_f) &= \frac{1}{t_f}\left\|B_{c}^{\dagger}x_0\right\|_{2}^{2} + t_f\left(\|B_{uc}\|_{2}^{2} + 1\right) \nonumber \\
&+ 2\left|B_{uc}^{T}B_{c}^{\dagger T}B_{c}^{\dagger}x_0\right|. \label{eq:EMbar_1Act}
\end{align}
Alternatively, if $B_{uc}^{T}B_{c}^{\dagger T}B_{c}^{\dagger}x_0 = 0$, the second term in the supremum vanishes, reducing the value of $\overline{E}_M(x_0, t_f)$. The worst-case total energy is thus achieved only when $B_{uc}^{T}B_{c}^{\dagger T}B_{c}^{\dagger}x_0 \neq 0$, with the constant uncontrolled input 
\begin{equation} \label{eq:uuc}
u_{uc}^{*}(t) = \sign(B_{uc}^{T}B_{c}^{\dagger T}B_{c}^{\dagger}x_0) ~\text{ for all }~ t \in [0, t_f].
\end{equation}
%
%

\subsection{Energetic Resilience Metric} \label{sec:Metric}
We now consider the energetic resilience metric $r_M(t_f, R)$ in \eqref{eq:rM_def}, and use the expression \eqref{eq:EMbar_1Act} to derive bounds on the this metric in the case when control authority is lost over a single actuator. We note that a bound on $r_M(t_f, R)$ can be derived when control authority is lost over $p > 1$ actuators using \eqref{eq:EMbar}. However, this bound is more conservative since \eqref{eq:EMbar} is only an upper bound on the worst-case total energy and not an exact expression as in \eqref{eq:EMbar_1Act}.

\begin{proposition} \label{prop:rM}
When $p = 1$, the energetic resilience metric $r_M(t_f, R)$ is bounded from below as follows:
\begin{align}
&r_M(t_f, R) \nonumber \\
&\geq \frac{R^2\lambda_{\min}\left(B^{\dagger T}B^{\dagger}\right)}{R^2L + 2Rt_f\|B_{uc}^{T}B_{c}^{\dagger T}B_{c}^{\dagger}\|_2 + t_{f}^{2}\left(\|B_{uc}\|_{2}^{2} + 1\right)}, \label{eq:rM_bound}
\end{align}
where $L \coloneqq \lambda_{\max}\left(B_{c}^{\dagger T}B_{c}^{\dagger}\right)$.
\end{proposition}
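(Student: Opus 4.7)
The plan is to bound the numerator and denominator of the ratio in \eqref{eq:rM_def} separately by quantities depending only on $\|x_0\|_2$, and then minimize the resulting scalar function over $\|x_0\|_2 \geq R$.

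First, I would lower-bound the nominal energy using the Rayleigh inequality from Section \ref{sec:Notation} applied to $B^{\dagger T}B^{\dagger}$:
\begin{equation*}
E_{N}^{*}(x_0, t_f) \;=\; \frac{1}{t_f}\,x_{0}^{T}B^{\dagger T}B^{\dagger}x_0 \;\geq\; \frac{\lambda_{\min}(B^{\dagger T}B^{\dagger})}{t_f}\,\|x_0\|_{2}^{2}.
\end{equation*}
Next, I would upper-bound the three terms of $\overline{E}_M(x_0, t_f)$ in \eqref{eq:EMbar_1Act}: the first term by $\frac{L}{t_f}\|x_0\|_{2}^{2}$ using Rayleigh on $B_{c}^{\dagger T}B_{c}^{\dagger}$, the cross term by $2\|B_{uc}^{T}B_{c}^{\dagger T}B_{c}^{\dagger}\|_{2}\,\|x_0\|_{2}$ via the Cauchy--Schwarz inequality applied to the row vector $B_{uc}^{T}B_{c}^{\dagger T}B_{c}^{\dagger}$ and $x_0$, and leave the middle term $t_f(\|B_{uc}\|_{2}^{2}+1)$ unchanged.

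Combining the two bounds and multiplying numerator and denominator of the ratio by $t_f$ gives, for every $x_0$,
\begin{equation*}
\frac{E_{N}^{*}(x_0, t_f)}{\overline{E}_M(x_0, t_f)} \;\geq\; \frac{\lambda_{\min}(B^{\dagger T}B^{\dagger})\,\|x_0\|_{2}^{2}}{L\,\|x_0\|_{2}^{2} + 2t_f\|B_{uc}^{T}B_{c}^{\dagger T}B_{c}^{\dagger}\|_{2}\,\|x_0\|_{2} + t_{f}^{2}(\|B_{uc}\|_{2}^{2}+1)}.
\end{equation*}
Writing $r = \|x_0\|_2$, the right-hand side has the form $\frac{a r^{2}}{b r^{2} + d r + c}$ with $a, b, c, d \geq 0$. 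Dividing numerator and denominator by $r^2$ shows the denominator equals $b + d/r + c/r^{2}$, which is strictly decreasing in $r$, so the ratio is non-decreasing in $r$. Therefore the infimum over $\|x_0\|_{2}\geq R$ is attained at $r=R$, yielding \eqref{eq:rM_bound}.

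I expect no serious obstacle here, since the argument reduces to standard Rayleigh and Cauchy--Schwarz estimates plus a one-variable monotonicity check. The only subtle point is that the closed-form expression \eqref{eq:EMbar_1Act} for $\overline{E}_M$ was derived under $B_{uc}^{T}B_{c}^{\dagger T}B_{c}^{\dagger}x_0 \neq 0$; when this quantity vanishes, $\overline{E}_M$ is \emph{strictly smaller} than the right-hand side of \eqref{eq:EMbar_1Act}, so the same upper bound still applies and the argument goes through unchanged.
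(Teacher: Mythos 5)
Your proof is correct and follows essentially the same route as the paper's: both rest on the Rayleigh inequality applied to $B^{\dagger T}B^{\dagger}$ and $B_{c}^{\dagger T}B_{c}^{\dagger}$ together with the Cauchy--Schwarz inequality on the cross term. The paper inverts the ratio and bounds the supremum of each resulting term using $\|B^{\dagger}x_0\|_{2}^{2} \geq R^2\lambda_{\min}(B^{\dagger T}B^{\dagger})$ directly, whereas you bound numerator and denominator separately and finish with a one-variable monotonicity check in $\|x_0\|_2$; these are interchangeable, and your explicit handling of the degenerate case $B_{uc}^{T}B_{c}^{\dagger T}B_{c}^{\dagger}x_0 = 0$ is a welcome extra precaution that the paper leaves implicit.
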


\begin{proof}
We provide a brief sketch of the proof to conserve space. Using \eqref{eq:EN}, \eqref{eq:EMbar_1Act} and \eqref{eq:rM_def},
\begin{align}
&r_M(t_f, R) =  \nonumber \\
&\inf_{\|x_0\|_2 \geq R} \frac{\frac{1}{t_f}\|B^{\dagger}x_0\|_{2}^{2}}{\frac{1}{t_f}\|B_{c}^{\dagger}x_0\|_{2}^{2} + 2\left|B_{uc}^{T}B_{c}^{\dagger T}B_{c}^{\dagger}x_0\right| + t_f\left(\|B_{uc}\|_{2}^{2} + 1\right)} \nonumber \\
&= \nonumber \\
&\frac{1}{\sup_{\|x_0\|_2 \geq R} \left\{\frac{\|B_{c}^{\dagger}x_0\|_{2}^{2}}{\|B^{\dagger}x_0\|_{2}^{2}} + 2t_f\frac{\left|B_{uc}^{T}B_{c}^{\dagger T}B_{c}^{\dagger}x_0\right|}{\|B^{\dagger}x_0\|_{2}^{2}} + t_{f}^{2}\frac{\|B_{uc}\|_{2}^{2} + 1}{\|B^{\dagger}x_0\|_{2}^{2}}\right\}}. \label{eq:rM_1}
\end{align}
Let $L \coloneqq \lambda_{\max}\left(B_{c}^{\dagger T}B_{c}^{\dagger}\right)$. Using the Rayleigh inequality on $B_{c}^{\dagger T}B_{c}^{\dagger}$ and $B^{\dagger T}B^{\dagger}$ and the Cauchy-Schwarz inequality, each term in the supremum can be bounded as follows:
\begin{align*}
\sup_{\|x_0\|_2 \geq R} \frac{\|B_{c}^{\dagger}x_0\|_{2}^{2}}{\|B^{\dagger}x_0\|_{2}^{2}} &\leq \frac{L}{\lambda_{\min}\left(B^{\dagger T} B^{\dagger}\right)}, \\
\sup_{\|x_0\|_2 \geq R} 2t_f\frac{\left|B_{uc}^{T}B_{c}^{\dagger T}B_{c}^{\dagger}x_0\right|}{\|B^{\dagger}x_0\|_{2}^{2}} &\leq 2t_f \frac{\|B_{uc}^{T}B_{c}^{\dagger T}B_{c}^{\dagger}\|_2}{R\lambda_{\min}\left(B^{\dagger T}B^{\dagger}\right)},\\
\sup_{\|x_0\|_2 \geq R} t_{f}^{2}\frac{\|B_{uc}\|_{2}^{2} + 1}{\|B^{\dagger}x_0\|_{2}^{2}} &\leq t_{f}^{2}\frac{\|B_{uc}\|_{2}^{2} + 1}{R^2\lambda_{\min}\left(B^{\dagger T}B^{\dagger}\right)}.
\end{align*}
Substituting in \eqref{eq:rM_1}, the result follows.
\end{proof}

In the following section, we present an example demonstrating the use of this metric in quantifying the maximal additional energy used by the malfunctioning system compared to the nominal system.

\section{Simulation Example} \label{sec:Example}
We now illustrate the use of the energetic resilience metric on a driftless model of an underwater robot. We consider a minor modification of the model considered by \cite{BO20}:
\begin{equation} \label{eq:Model}
\begin{bmatrix} \dot{x} \\ \dot{y} \end{bmatrix} = \begin{bmatrix} 2 & \phantom{-}1 & \phantom{-}1 \\ 0.2 & -1 & \phantom{-}1 \end{bmatrix} \begin{bmatrix} u_1 \\ u_2 \\ u_3 \end{bmatrix},
\end{equation}
where the input matrix is $B$. Here, $x$ and $y$ are the coordinates of the robot. The engine $u_1$ acts primarily along the $x$-direction, with a small bias in the $y$-direction. $u_2$ and $u_3$ are engines at a $45^{\circ}$ angle to the $x$- and $y$-directions, opposing each other. {The model in \cite{BO20} heavily biased the engine $u_1$ towards the $x$-direction, and the corresponding entry in the input matrix was $10$. We reduce this bias by modifying the entry to $2$.} 

We assume that a malfunction causes the system to lose authority over $u_3$. Then, $B_c = \begin{bmatrix} 2 & \phantom{-}1 \\ 0.2 & -1 \end{bmatrix} ~\text{ and }~  B_{uc} = \begin{bmatrix} 1 \\ 1 \end{bmatrix}.$ In what follows, for notational simplicity, we denote $E_N \equiv E_{N}^{*}(x_0, t_f)$, $\overline{E}_M \equiv \overline{E}_M(x_0, t_f)$ and $E_{M}^{+} \equiv E_{M}^{+}(x_0, t_f, u_{uc})$. We now compare the nominal energy $E_N$ \eqref{eq:EN} to the total energy $E_{M}^{+}$ from \eqref{eq:EM} and the definition \eqref{eq:EMplus}, and worst-case total energy $\overline{E}_M$ \eqref{eq:EMbar_1Act}. In particular, we demonstrate the accuracy of the energetic resilience bound \eqref{eq:rM_bound} in quantifying this comparison. We fix $t_f = 10$ and vary the distance $R$ of the initial condition $x_0$ from the origin. 

\begin{figure}[!t]
	\centering
	\includegraphics[width = 0.45\textwidth]{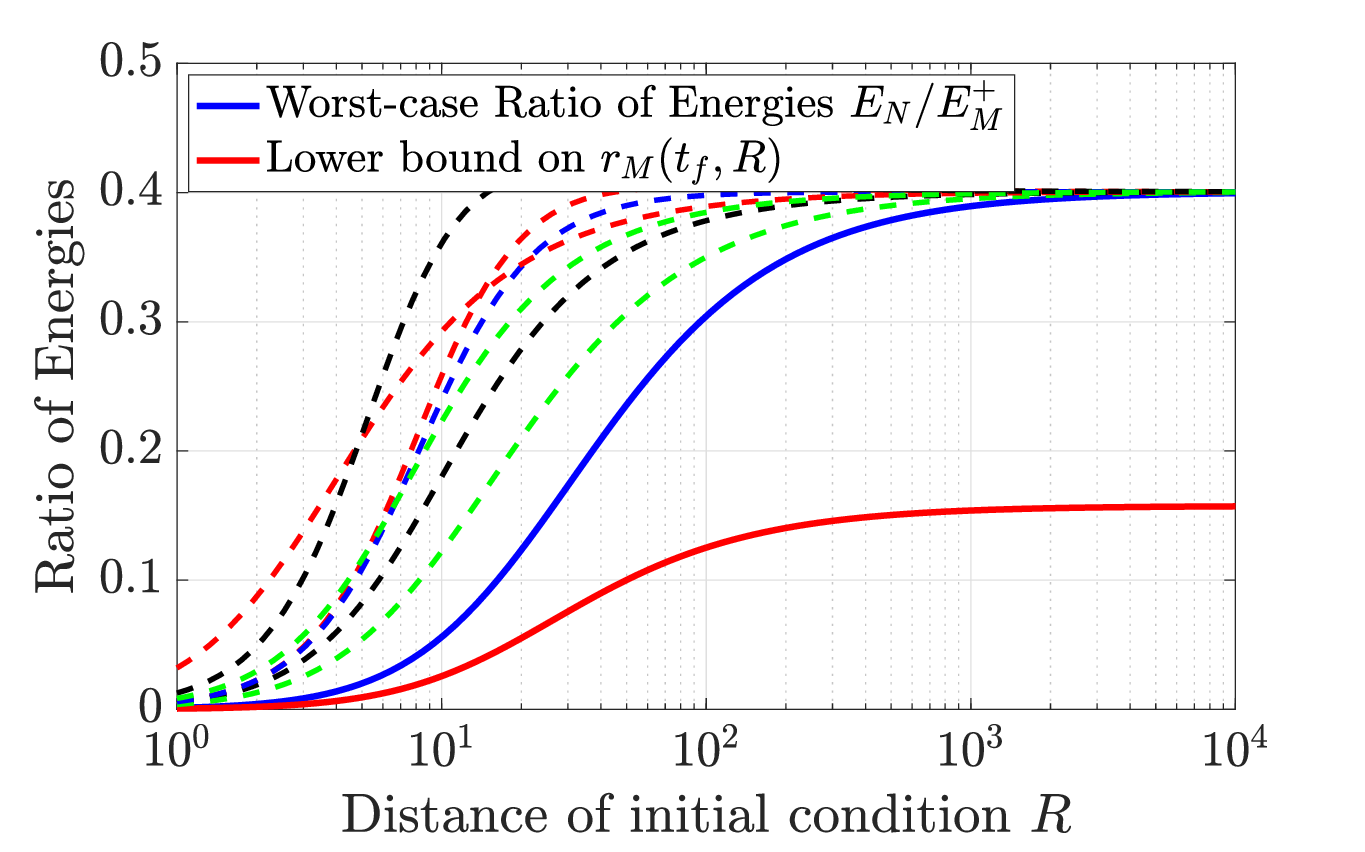}
	\vspace{-0.2cm}
	\caption{Ratio of nominal and malfunctioning energies compared to the resilience metric. The thinner dashed lines plot $E_{M}^{+}-E_N$ for various uncontrolled inputs.}
	\label{fig:Metric}
\end{figure}

Fig.~\ref{fig:Metric} shows the ratio of the nominal energy $E_N$ and the total energy $E_{M}^{+}$ as a function of $R$ in the thin dashed lines. Here, we test a large class of uncontrolled inputs, including constant, full-amplitude inputs of the form \eqref{eq:uuc} and different classes of low- and high-frequency sinusoids. We also plot the lower bound on $r_M(t_f, R)$ from \eqref{eq:rM_bound} as a function of $R$, shown in the thick red line. It is evident that $r_M(t_f, R)$ bounds the ratio of nominal and augmented malfunctioning energies from below. Furthermore, the thick blue line represents the ratio $E_{N}/E_{M}^{+}$ for the worst-case uncontrolled input in \eqref{eq:uuc}, where $E_{M}^{+} = \overline{E}_M$. This ratio is calculated using \eqref{eq:EN} and \eqref{eq:EMbar_1Act}. The bound on $r_M(t_f, R)$ is a reasonable approximation for this ratio, especially for initial conditions closer to the origin. For instance, when $R = 10$, we have $r_M(t_f, R) \approx 0.03$, while $E_N/\overline{E}_M \approx 0.05$. The bound on $r_M(t_f, R)$ indicates that \emph{at most} $1/0.03 = 33.33$ times more energy is used by the actuators to achieve finite-time regulation when a loss of control authority of the form described above occurs. The actual ratio indicates that around $1/0.05 \approx 20$ times the energy is required to achieve the task. The metric $r_M(t_f, R)$ is thus a useful quantity to characterize the maximal additional energy required to achieve a task when control authority is lost over a subset of actuators.

If we considered the model used by \cite{BO20}, this metric would be relatively less useful in characterizing the additional energy required, in comparison to the model in \eqref{eq:Model}. The heavier weight of $u_1$ on $x$ contributes to poor conditioning of the matrices $B^{\dagger T}B^{\dagger}$ and $B_{c}^{\dagger T} B_{c}^{\dagger}$, resulting in a more conservative bound in \eqref{eq:rM_bound} than in the example presented here.

\section{Conclusions and Future Work} \label{sec:Conclusion}
In this paper, our objective was to quantify the maximal additional energy used by a system which loses control authority over a subset of actuators, compared to a system with no such malfunction. We thus considered the special case of linear driftless systems and introduced an energetic resilience metric comparing the nominal and worst-case total energies to achieve finite-time regulation. Deriving the nominal and worst-case total energies for this task used a technical lemma proved using the calculus of variations. When considering the special case of losing control authority over one actuator, we obtained an exact expression for the worst-case total energy, allowing us to obtain a bound on the resilience metric. A simulation example on a model of an underwater robot demonstrated the applicability of this metric in characterizing the additional energy used by a malfunctioning system. Our ongoing work involves examining resilience of general nonlinear systems.

\balance

\bibliography{references}

\end{document}